\documentclass[11pt]{article}

\usepackage{amsmath, amssymb, amsthm}
\usepackage{hyperref}
\usepackage{authblk}

\newtheorem{lemma}{Lemma}[section]
\newtheorem{theorem}[lemma]{Theorem}
\newtheorem{proposition}[lemma]{Proposition}
\newtheorem{corollary}[lemma]{Corollary}
\newtheorem{definition}[lemma]{Definition}
\newtheorem{conjecture}{Conjecture}

\title{On 132-Avoiding Permutations with an Adjacency Constraint}

\author{
Nathaniel Nadler$^{\dagger}$\thanks{$^{\dagger}$Ramapo College of New Jersey. 
Advisor: Dr.\ Katarzyna Kowal. 
Email: \href{mailto:nnadler@ramapo.edu}{nnadler@ramapo.edu}}
}
\date{\today}

\begin{document}

\maketitle

\begin{abstract}
We study the class of permutations in $S_n$ that simultaneously avoid the pattern $132$ and satisfy the adjacency bound $|\pi_{i+1} - \pi_i| \leq m$ for all $i$, denoting its cardinality $A_n^{(m)}$. This combination of a global pattern restriction and a local bounded-difference condition produces a dramatic structural collapse: whereas unrestricted $132$-avoiding permutations are counted by the Catalan numbers with exponential growth rate $4$, the adjacency constraint forces the maximum element $n$ to occupy only positions in $\{1, 2, \ldots, m\} \cup \{n\}$. We give a complete solution for $m = 2$ by partitioning the class into three disjoint subclasses according to whether $n$ appears in position $1$, $2$, or $n$. This decomposition yields $|D_n| = n-1$, a bijection giving $|C_n| = |B_{n-2}|$, and the recurrence $|B_n| = |B_{n-1}| + |B_{n-3}| + 1$ for $n \geq 4$, from which we derive the linear recurrence $A_n = 3A_{n-1} - 3A_{n-2} + 2A_{n-3} - 2A_{n-4} + A_{n-5}$ for $n \geq 7$ and the rational ordinary generating function $A(x) = \frac{-x^6 + x^5 - 3x^4 + 2x^3 - x^2 + x}{(1-x)^2(1 - x - x^3)}$. The dominant singularity at $\rho \approx 0.6823$, the unique positive real root of $1 - x - x^3 = 0$, yields precise asymptotics $A_n^{(2)} \sim C\alpha^n$ where $\alpha = \rho^{-1} \approx 1.4656$ satisfies $\alpha^3 = \alpha^2 + 1$ and $C \approx 1.5077$. We conjecture that for every fixed $m$ the class $A_n^{(m)}$ admits a finite-state structural decomposition, a linear recurrence with constant coefficients, and a rational generating function, with growth constants satisfying $1 = \alpha_1 < \alpha_2 < \alpha_3 < \cdots \to 4$.
\end{abstract}

\section{Introduction}

\subsection{Background on 132-Avoiding Permutations}

Permutation pattern avoidance concerns permutations that do not contain a specified relative ordering among some subsequence of entries. Formally, a permutation $\pi \in S_n$ avoids the pattern $132$ if there do not exist indices $i<j<k$ such that
\[
\pi_i < \pi_k < \pi_j.
\]
More generally, pattern avoidance defines \emph{permutation classes}, that is, sets of permutations closed under taking subpermutations; see \cite{Vatter2015} for a modern overview.

It is well known that the set of $132$-avoiding permutations of length $n$ is enumerated by the Catalan numbers $C_n$, given explicitly by
\[
C_n = \frac{1}{n+1}\binom{2n}{n}.
\]
This enumeration arises from a classical structural decomposition. If $\pi$ avoids $132$ and the maximum element $n$ occurs in position $k$, then all entries to the left of $n$ are larger than all entries to the right of $n$, and both sides form $132$-avoiding permutations. This recursive structure yields the Catalan recurrence
\[
C_n = \sum_{k=1}^{n} C_{k-1} C_{n-k},
\]
as discussed in standard references such as \cite{Stanley2015,Bona2012}.

\subsection{Adjacency Constraints and Motivation}

In this paper we refine the classical $132$-avoiding class by imposing a local adjacency constraint. Specifically, we require that
\[
|\pi_{i+1} - \pi_i| \le m
\quad \text{for all } 1 \le i < n.
\]
This condition may be viewed as a discrete Lipschitz-type constraint, restricting the magnitude of successive jumps in the permutation. Local bounded-difference conditions have appeared in other enumerative contexts; for example, permutations with bounded displacement were studied in \cite{ChungClaessonDukesGraham2010}.

The interaction between pattern avoidance and adjacency bounds is particularly compelling. Pattern avoidance is inherently global, constraining relative order across triples of indices, whereas adjacency bounds are local. Their combination produces substantial structural rigidity. Such multi-constraint families fall naturally within the broader framework of permutation classes defined by multiple restrictions, as discussed in \cite{AlbertAtkinson2005}.

The extremal behavior of the adjacency parameter $m$ interpolates between trivial and Catalan regimes. When $m=1$, the class collapses to a highly restricted family, while for $m \ge n-1$ the adjacency constraint becomes vacuous and the enumeration reduces to the Catalan numbers.

\subsection{Main Results}

Let $A^{(m)}_n$ denote the number of permutations of length $n$ that avoid $132$ and satisfy the adjacency bound $|\pi_{i+1}-\pi_i|\le m$. Our principal contribution is a complete solution of the case $m=2$. We prove that for every such permutation, the maximum element $n$ may occur only in positions $1$, $2$, or $n$. This structural restriction yields a finite decomposition of the class, from which we derive an explicit linear recurrence and a rational generating function for $A^{(2)}_n$. We further determine its exponential growth rate.

\subsection{Organization of the Paper}

Section \ref{sec:notation} introduces notation and formal definitions, and Section \ref{sec:extremal} treats the extremal cases. Section \ref{sec:m2-structure} develops the complete structural and enumerative theory for $m=2$, and derives the associated generating functions, recurrences, and asymptotics. Section \ref{sec:conjectures} presents conjectures and section \ref{sec:future} explores directions for future research.

\section{Necessary Preliminary Knowledge}
\label{sec:notation}

\subsection{Key Structural Lemma}

We begin with the classical structural decomposition of $132$-avoiding permutations. This lemma forms the foundation of the Catalan recurrence and will remain central when the adjacency constraint is introduced.

\begin{lemma}[Maximum Decomposition for $132$-Avoiders]
\label{lem:132-max}
Let $\pi \in S_n$ avoid the pattern $132$, and suppose the maximum element $n$ occurs in position $k$. Write
\[
\pi = L \, n \, R,
\]
where $L$ is the subsequence of entries to the left of $n$ and $R$ is the subsequence of entries to the right of $n$. Then every element of $L$ is greater than every element of $R$. That is,
\[
\forall x \in L, \ \forall y \in R, \quad x > y.
\]
\end{lemma}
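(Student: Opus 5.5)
The plan is a direct proof by contradiction using only the definition of $132$-avoidance, with the maximum element $n$ playing the role of the ``3'' in the pattern. Suppose $\pi = L\,n\,R$ avoids $132$ but the conclusion fails. Then there exist $x \in L$ and $y \in R$ with $x \le y$. Since $\pi$ is a permutation and $x, y$ occupy different positions, they are distinct, so in fact $x < y$.

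Next I would name the positions. Let $x = \pi_i$ with $i < k$ (because $x$ lies in $L$), and let $y = \pi_j$ with $j > k$ (because $y$ lies in $R$). This gives three indices $i < k < j$ carrying the values $\pi_i = x$, $\pi_k = n$, $\pi_j = y$. Both $x$ and $y$ differ from $n$, so both are strictly less than $n$. Combining this with $x < y$ yields
\[
\pi_i < \pi_j < \pi_k,
\]
which is exactly an occurrence of the pattern $132$ at indices $i < k < j$. This contradicts the hypothesis that $\pi$ avoids $132$, so every element of $L$ must exceed every element of $R$.

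There is no real obstacle here; the only points needing care are bookkeeping. First, one must use strict inequality between $x$ and $y$, which follows from distinctness of the entries. Second, the index order in the pattern definition ($i<j<k$ with $\pi_i<\pi_k<\pi_j$) must be matched correctly to the roles here, where the middle index is the position of $n$. The degenerate cases with $L$ or $R$ empty make the statement vacuously true and need only a one-line remark.
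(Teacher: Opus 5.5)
Your proof is correct and follows the paper's route: assume some $x \in L$ and $y \in R$ with $x < y$, and observe that $x$, $n$, $y$ at positions $i<k<j$ form an occurrence of $132$. Your remarks on getting strict inequality from distinctness and on the vacuous case of empty $L$ or $R$ are bookkeeping that the paper leaves implicit.
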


\begin{proof}
Suppose, for contradiction, that there exist $x \in L$ and $y \in R$ such that $x < y$.

Since $x$ appears to the left of $n$ and $y$ appears to the right of $n$, there exist indices $i < k < j$ such that
\[
\pi_i = x, \quad \pi_k = n, \quad \pi_j = y.
\]
Because $x < y < n$, the triple $(x, n, y)$ forms a $132$-pattern:
\[
\pi_i < \pi_j < \pi_k.
\]
This contradicts the assumption that $\pi$ avoids $132$. Therefore no such pair $x < y$ can exist, and we conclude that every element of $L$ is greater than every element of $R$.
\end{proof}

\medskip

This structural separation immediately yields several consequences.

\begin{corollary}
Let $\pi = L \, n \, R$ avoid $132$.
\begin{enumerate}
    \item The subsequences $L$ and $R$ use disjoint intervals of values.
    \item The values appearing in $L$ are precisely the largest $|L|$ elements of $\{1,\dots,n-1\}$.
    \item The values appearing in $R$ are precisely the smallest $|R|$ elements of $\{1,\dots,n-1\}$.
    \item Both $L$ and $R$ avoid $132$.
\end{enumerate}
\end{corollary}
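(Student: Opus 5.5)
The plan is to derive all four items directly from Lemma~\ref{lem:132-max}, together with the fact that $L$, $R$, and $\{n\}$ partition the value set $\{1,\dots,n\}$. Items (1)--(3) are statements about value sets and follow by counting. Item (4) follows from the fact that pattern containment is inherited by subsequences.

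For (1), Lemma~\ref{lem:132-max} gives $\min L > \max R$ whenever both are nonempty. Hence the value sets of $L$ and $R$ are disjoint and separated: every value of $L$ exceeds every value of $R$. If either side is empty the claim is trivial.

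For (2) and (3), I will set $a=|L|$ and $b=|R|$. Since $\pi$ is a permutation and $n$ sits between $L$ and $R$, the values of $L$ and $R$ together are exactly $\{1,\dots,n-1\}$, so $a+b=n-1$. Every element of $R$ is smaller than every element of $L$, so the $b$ values in $R$ must be the $b$ smallest elements of $\{1,\dots,n-1\}$. To see this, suppose some $v\le b$ were not in $R$. Then $v\in L$. Since $R$ has $b$ elements all smaller than $v\le b$, there would be $b$ distinct positive integers below $b$, which is impossible. Thus $R=\{1,\dots,b\}$ as a set, and the complement $L=\{b+1,\dots,n-1\}$ consists of the largest $a$ elements. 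This also shows that the value intervals in (1) are genuine intervals.

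For (4), I will observe that $L$ and $R$ are subsequences of $\pi$. Any occurrence of $132$ in $L$, meaning indices $i<j<k$ within $L$ with $L_i<L_k<L_j$, corresponds to positions in $\pi$ in the same relative order with the same values. It would therefore be an occurrence of $132$ in $\pi$, which is a contradiction. The same argument applies to $R$.

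There is no real obstacle here. The only step needing care is the counting argument in (2)--(3), which upgrades the separation from Lemma~\ref{lem:132-max} to the exact identification of the value sets. It is also worth remarking that each of $L$ and $R$, once standardized, is a $132$-avoiding permutation, which is what the Catalan recurrence uses.
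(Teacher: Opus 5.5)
Your proposal is correct and follows the same route as the paper. Items (1)--(3) come from the separation in Lemma~\ref{lem:132-max}, where you also spell out the counting argument that the paper calls ``immediate,'' and item (4) comes from the observation that a $132$-occurrence inside $L$ or $R$ would also be one in $\pi$.
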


\begin{proof}
The first three statements follow immediately from the lemma, since every element of $L$ must exceed every element of $R$. The final statement follows because any $132$-pattern entirely contained in $L$ (or $R$) would also appear in $\pi$.
\end{proof}

\medskip

As a consequence, if $C_n$ denotes the number of $132$-avoiding permutations in $S_n$, then conditioning on the position of $n$ yields the classical Catalan recurrence:
\[
C_n = \sum_{k=1}^{n} C_{k-1} C_{n-k}.
\]
It is classical that $132$-avoiding permutations are enumerated by the Catalan numbers (see \cite{SimionSchmidt1985}).

\subsection{Enumeration and Notational Conventions}

Let $S_n$ denote the symmetric group on $\{1,\dots,n\}$.

A permutation $\pi \in S_n$ contains the pattern $132$ if there exist indices $i<j<k$ such that
\[
\pi_i < \pi_k < \pi_j.
\]
If no such triple exists, $\pi$ is said to avoid $132$.

For a fixed integer $m \ge 1$, define
\[
A_n^{(m)} =
\left|
\left\{
\pi \in S_n :
\pi \text{ avoids } 132
\text{ and }
|\pi_{i+1} - \pi_i| \le m
\text{ for all } 1 \le i \le n-1
\right\}
\right|.
\]

When $m$ is fixed and clear from context, we write $A_n$.

We define the ordinary generating function
\[
A^{(m)}(x) = \sum_{n \ge 1} A_n^{(m)} x^n.
\]

We define the exponential growth rate
\[
\alpha_m = \limsup_{n \to \infty} \left(A_n^{(m)}\right)^{1/n}.
\]

When $A^{(m)}(x)$ is rational, the growth rate $\alpha_m$ equals the reciprocal of the smallest-modulus singularity of the generating function.

Throughout the paper, when we write
\[
\pi = L \, n \, R,
\]
the subsequences $L$ and $R$ are interpreted in their natural order as they appear in $\pi$. Standardization is applied only when explicitly stated.

\subsection{Maximal Element Position Theorem}

\begin{theorem}[Positional restriction of the maximum]
\label{thm:max-position-general-m}
Let $m \ge 1$ be fixed and let $\pi \in S_n$ satisfy
\[
\pi \text{ avoids } 132
\quad \text{and} \quad
|\pi_{i+1}-\pi_i| \le m
\text{ for all } 1 \le i < n.
\]
If $\pi_k = n$, then
\[
k \in \{1,2,\dots,m\} \cup \{n\}.
\]
\end{theorem}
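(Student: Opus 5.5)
The plan is to argue by contradiction using Lemma~\ref{lem:132-max} (and its corollary), looking only at the entry immediately to the right of $n$. Suppose $\pi_k = n$ with $m < k < n$, and write $\pi = L\,n\,R$ as in Lemma~\ref{lem:132-max}. Then $|L| = k-1$ and $|R| = n-k \ge 1$, so $R$ is nonempty and the entry $\pi_{k+1}$ exists and belongs to $R$.

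By the corollary to Lemma~\ref{lem:132-max}, the values in $R$ are exactly the smallest $|R| = n-k$ elements of $\{1,\dots,n-1\}$, namely $\{1,\dots,n-k\}$. In particular $\pi_{k+1} \le n-k$. Hence
\[
|\pi_{k+1} - \pi_k| = n - \pi_{k+1} \ge n - (n-k) = k \ge m+1 > m ,
\]
which violates the adjacency bound at index $k$. This is a contradiction, so no position $k$ with $m < k < n$ can hold $n$. Therefore $k \in \{1,\dots,m\} \cup \{n\}$.

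The other cases need no argument. If $k = n$, then $R$ is empty and there is no right neighbour to constrain. If $k \le m$, the statement allows it. The only step needing care is identifying the value set of $R$ precisely rather than just bounding it: we need that $R$ consists of the \emph{smallest} $n-k$ values, not merely that its values lie below those of $L$. That is exactly what item 3 of the corollary provides, so I do not expect a real obstacle. The left neighbour of $n$ plays no role, and the pattern condition enters only through the separation in Lemma~\ref{lem:132-max}.
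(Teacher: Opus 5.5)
Your proof is correct and is essentially the paper's argument. The paper applies the separation of Lemma~\ref{lem:132-max} from the left side: since $|L| = k-1 \ge m$, the values $n-m,\dots,n-1$ all lie in $L$, so none is available for $\pi_{k+1}$. You apply it from the right side, using that $R$ has value set $\{1,\dots,n-k\}$, which is the same observation and even gives the slightly sharper fact that the step after $n$ has size at least $k$. The paper also exhibits permutations realizing each allowed position, but that is not part of the statement, so omitting it is fine.
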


\begin{proof}
We first show that each position $k \in \{1,2,\dots,m\}$ and $k=n$
is realizable.

\smallskip
\noindent
\textbf{Existence for $1 \le k \le m$.}
Fix $k \le m$. Consider the permutation
\[
\pi =
(n-k+1, n-k+2, \dots, n,
 n-k, n-k-1, \dots, 1).
\]
The first $k$ entries form an increasing sequence ending at $n$,
so $\pi_k = n$.

We verify the adjacency constraint.
Inside the increasing block, consecutive differences are $1$.
Inside the decreasing block, consecutive differences are also $1$.
The only nontrivial step is from $n$ to $n-k$, whose difference is
\[
|n-(n-k)| = k \le m.
\]
Thus $|\pi_{i+1}-\pi_i| \le m$ for all $i$.

To verify $132$-avoidance, write $\pi = L\,n\,R$,
where $L = (n-k+1,\dots,n-1)$ and $R=(n-k,\dots,1)$.
Every element of $L$ exceeds every element of $R$,
so by the maximum decomposition lemma for $132$-avoiders,
$\pi$ avoids $132$.

\smallskip
\noindent
\textbf{Existence for $k=n$.}
The increasing permutation $(1,2,\dots,n)$
satisfies the adjacency constraint and avoids $132$,
and clearly has $\pi_n = n$.

\smallskip
\noindent
We now show that no other positions are possible.

Suppose for contradiction that $m+1 \le k \le n-1$
and that $\pi_k = n$.

Write $\pi = L\,n\,R$.
By the maximum decomposition lemma for $132$-avoiders,
every element of $L$ is greater than every element of $R$.

Since $k \ge m+1$, the block $L$ contains at least $m$ elements.
Because $L$ consists of the largest $|L|$ elements of
$\{1,\dots,n-1\}$, and $|L| = k-1 \ge m$,
it follows that
\[
\{n-m, n-m+1, \dots, n-1\} \subseteq L.
\]

Now consider the element immediately to the right of $n$,
namely $\pi_{k+1}$.
By the adjacency constraint,
\[
|\pi_{k+1} - n| \le m,
\]
so
\[
\pi_{k+1} \in \{n-m, n-m+1, \dots, n-1\}.
\]
But all these values lie in $L$, so none can appear in $R$.
This is a contradiction.

Therefore no $k \in \{m+1,\dots,n-1\}$ is possible,
and the only valid positions for $n$ are
\[
\{1,2,\dots,m\} \cup \{n\}.
\]
\end{proof}

\section{Extremal Cases}
\label{sec:extremal}

We begin by analyzing the two extremal regimes of the adjacency parameter $m$. These cases illustrate the interpolation between complete rigidity and the classical Catalan class.

\subsection{The Case $m=1$}

When $m=1$, the adjacency condition requires
\[
|\pi_{i+1} - \pi_i| \le 1
\quad \text{for all } 1 \le i \le n-1.
\]

\begin{proposition}
For $n \ge 2$, the only permutations in $S_n$ satisfying the adjacency bound $m=1$ are the identity permutation
\[
(1,2,\dots,n)
\]
and the reverse identity permutation
\[
(n,n-1,\dots,1).
\]
In particular,
\[
A_n^{(1)} = 2 \quad \text{for all } n \ge 2.
\]
\end{proposition}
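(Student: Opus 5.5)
The plan is to show directly that a permutation with all consecutive differences of absolute value at most $1$ must be monotone with unit steps, and then to check that both monotone permutations avoid $132$. Since the entries of a permutation are distinct, consecutive entries are never equal. The condition $|\pi_{i+1}-\pi_i|\le 1$ therefore forces $\pi_{i+1}-\pi_i\in\{+1,-1\}$ for every $i$.

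The key step is to rule out a change of direction. Suppose there is an index $i$ with $1\le i\le n-2$ such that the step from $\pi_i$ to $\pi_{i+1}$ and the step from $\pi_{i+1}$ to $\pi_{i+2}$ have opposite signs. Then $\pi_{i+2}=\pi_{i+1}\mp 1\pm\ldots$ works out to $\pi_{i+2}=\pi_i$, which contradicts injectivity. So all steps have the same sign.

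If every step is $+1$, then $\pi_j=\pi_1+(j-1)$. The entries form an interval of length $n$ inside $\{1,\dots,n\}$, so $\pi_1=1$ and $\pi$ is the identity. Symmetrically, if every step is $-1$, then $\pi$ is the reverse identity. For $n\ge 2$ these two permutations are distinct, and both satisfy the adjacency bound.

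It remains to check $132$-avoidance. The identity has no triple $i<j<k$ with $\pi_k<\pi_j$, since all its entries increase. The reverse identity has no triple with $\pi_i<\pi_k$, since all its entries decrease. Hence neither contains $132$. Alternatively, this follows from Lemma~\ref{lem:132-max}: in the identity $L$ is empty, and in the reverse identity $n$ sits in position $1$ with every later entry smaller. This gives $A_n^{(1)}=2$.

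The main obstacle is only the no-turning argument in the second step, and it is immediate once it is phrased as "a turn revisits a value." The argument is consistent with Theorem~\ref{thm:max-position-general-m}: for $m=1$ that theorem allows $n$ only in positions $1$ and $n$, which matches the two permutations found.
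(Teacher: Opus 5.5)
Your proof is correct and follows the paper's route: unit steps forced by injectivity, no change of direction because a turn gives $\pi_{i+2}=\pi_i$, so $\pi$ is monotone and hence the identity or its reverse, both of which avoid $132$. One slip is in your optional aside: in the identity it is $R$, not $L$, that is empty, and Lemma~\ref{lem:132-max} only gives a necessary condition for $132$-avoidance, so it cannot by itself certify avoidance; your direct check already settles this, so the aside can simply be dropped.
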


\begin{proof}
Suppose $\pi \in S_n$ satisfies $|\pi_{i+1}-\pi_i|\le 1$ for all $i$.

Since $\pi$ is a permutation of $\{1,\dots,n\}$, every consecutive pair must differ by exactly $1$. If at any index the difference were $0$, this would contradict injectivity.

Thus for every $i$,
\[
\pi_{i+1} = \pi_i \pm 1.
\]

Fix $\pi_1 = k$. Since all values must appear exactly once and consecutive differences are $\pm 1$, the sequence must move monotonically until it reaches either $1$ or $n$, and cannot reverse direction without repeating a value. Indeed, any change in direction would force revisiting a previously used integer.

Therefore the permutation must be either strictly increasing or strictly decreasing. The increasing case occurs when $\pi_1 = 1$, giving $(1,2,\dots,n)$, and the decreasing case occurs when $\pi_1 = n$, giving $(n,n-1,\dots,1)$.

These are the only possibilities.
\end{proof}

\medskip

Observe that both monotone permutations avoid $132$, so the pattern-avoidance condition imposes no further restriction in this regime.

\subsection{The Case $m \ge n-1$ (Catalan Regime)}

We now consider the opposite extreme.

\begin{proposition}
If $m \ge n-1$, then the adjacency condition is vacuous. Consequently,
\[
A_n^{(m)} = C_n,
\]
where $C_n$ is the $n$th Catalan number.
\end{proposition}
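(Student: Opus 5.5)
The plan is to show that the adjacency condition holds automatically for every permutation in $S_n$ once $m \ge n-1$. After that, the count reduces to the classical enumeration of $132$-avoiders.

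\textbf{Step 1: the constraint is vacuous.} Let $\pi \in S_n$ be arbitrary and fix $1 \le i < n$. Both $\pi_i$ and $\pi_{i+1}$ lie in $\{1,\dots,n\}$, so
\[
|\pi_{i+1}-\pi_i| \le n-1 \le m .
\]
Hence every permutation of $S_n$ satisfies $|\pi_{i+1}-\pi_i|\le m$ for all $i$. This settles the first sentence of the proposition.

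\textbf{Step 2: identify the set.} By Step 1, the set defining $A_n^{(m)}$ is exactly the set of $132$-avoiding permutations in $S_n$, so $A_n^{(m)}$ equals the number of $132$-avoiders of length $n$.

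\textbf{Step 3: count it.} I would cite the classical result \cite{SimionSchmidt1985}, recalled in Section~\ref{sec:notation}, that this number is $C_n = \frac{1}{n+1}\binom{2n}{n}$. For a self-contained route, I would instead condition on the position $k$ of $n$ and apply Lemma~\ref{lem:132-max} and its corollary. These show that $L$ is a $132$-avoider on the top $k-1$ values and $R$ is a $132$-avoider on the bottom $n-k$ values. Conversely, any such pair $(L,R)$ gives a $132$-avoider, because a $132$ occurrence cannot use entries from both $L$ and $R$, nor use $n$ except as the middle entry with its outer entries split across the two sides. This yields the recurrence $C_n=\sum_{k=1}^n C_{k-1}C_{n-k}$ with $C_0=1$, and the standard generating-function argument solves it.

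I expect no real obstacle here. The only point that needs care is the converse direction in the self-contained route: one must check that concatenating an arbitrary $132$-avoiding $L$ (on the top values), then $n$, then an arbitrary $132$-avoiding $R$ (on the bottom values) never creates a $132$. The case analysis on where the three pattern entries lie handles this, using that every entry of $L$ exceeds every entry of $R$.
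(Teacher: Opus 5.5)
Your proposal is correct and matches the paper's proof: the bound $|\pi_{i+1}-\pi_i|\le n-1\le m$ makes the adjacency condition vacuous, so $A_n^{(m)}$ counts exactly the $132$-avoiders, which the classical result enumerates by $C_n$. Your optional self-contained derivation of the Catalan recurrence via the maximum decomposition goes beyond the paper, which simply cites the classical result, but it is also sound.
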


\begin{proof}
For any permutation $\pi \in S_n$ and any index $i$,
\[
|\pi_{i+1}-\pi_i| \le n-1,
\]
since the largest possible difference between two distinct elements of $\{1,\dots,n\}$ is $n-1$. Thus if $m \ge n-1$, the adjacency constraint imposes no restriction.

Therefore $A_n^{(m)}$ counts exactly the $132$-avoiding permutations in $S_n$, which are known to be enumerated by the Catalan numbers.
\end{proof}

\medskip

These two extremal regimes illustrate the interpolation governed by $m$. When $m=1$, the class collapses to two rigid permutations, exhibiting linear growth. When $m \ge n-1$, the full Catalan structure is recovered, with exponential growth rate $4$. The intermediate cases $2 \le m \le n-2$ therefore represent genuinely new combinatorial behavior, arising from the interaction between the global $132$-decomposition and the local adjacency constraint.

\section{The Case $m=2$}
\label{sec:m2-structure}

In this section we analyze the first nontrivial regime, namely $m=2$. 
We show that the adjacency constraint, when combined with $132$-avoidance, 
forces a dramatic structural collapse.

Throughout this section, let $A_n$ denote the set of permutations 
$\pi \in S_n$ that avoid $132$ and satisfy
\[
|\pi_{i+1} - \pi_i| \le 2 \quad \text{for all } i.
\]

\subsection{Position of the Maximum Element}

We first determine where the maximum element $n$ may occur.

\begin{corollary}[Position of the maximum for $m=2$]\label{lem:m2-pos}
Let $\pi \in A_n^{(2)}$. Then the maximum element $n$ can occur only in
positions
\[
\{1,2,n\}.
\]
\end{corollary}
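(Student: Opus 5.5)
This is the specialization $m=2$ of Theorem~\ref{thm:max-position-general-m}. The plan is to invoke that theorem directly and add a short self-contained check of the one step that carries the weight. Let $\pi \in A_n^{(2)}$ with $\pi_k = n$. Then $\pi$ avoids $132$ and satisfies $|\pi_{i+1}-\pi_i|\le 2$, so the hypotheses of Theorem~\ref{thm:max-position-general-m} hold with $m=2$. The theorem gives $k \in \{1,\dots,m\}\cup\{n\} = \{1,2,n\}$, which is the claim. For small $n$ (say $n\le 2$) the set $\{1,2,n\}$ already contains every position, so nothing needs checking there.

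For completeness I would re-derive the exclusion in this special case, since it is the only substantive step. Suppose $3 \le k \le n-1$. Write $\pi = L\,n\,R$ with $|L| = k-1 \ge 2$ and $R$ nonempty. By Lemma~\ref{lem:132-max} and its corollary, $L$ consists of the largest $k-1$ values of $\{1,\dots,n-1\}$, so $\{n-2,n-1\}\subseteq L$. The entry $\pi_{k+1}$ exists because $k<n$. It lies in $R$ and satisfies $|\pi_{k+1}-n|\le 2$, which forces $\pi_{k+1}\in\{n-2,n-1\}\subseteq L$. This contradicts $L \cap R = \emptyset$.

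No step is a real obstacle. The only care needed is to note that $k\le n-1$ guarantees a right neighbour of $n$, and that $|L|\ge 2$ is exactly what is needed to capture both values within distance $2$ of $n$. Realizability of the positions $1$, $2$, and $n$ is not part of this statement, but it follows from the constructions in the proof of Theorem~\ref{thm:max-position-general-m}: for example $(n-1,n,n-2,\dots,1)$ places $n$ in position $2$.
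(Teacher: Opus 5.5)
Your proposal is correct and matches the paper, whose proof is simply Theorem~\ref{thm:max-position-general-m} specialized to $m=2$. Your self-contained check of the exclusion step follows that theorem's own argument: $|L|\ge 2$ forces $\{n-2,n-1\}\subseteq L$, so the right neighbour of $n$ has no admissible value.
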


\begin{proof}
By Theorem~\ref{thm:max-position-general-m}, for a permutation in $A_n^{(m)}$
the maximum element $n$ may occur only in positions
\[
\{1,2,\dots,m\} \cup \{n\}.
\]
Substituting $m=2$ yields the allowed positions
\[
\{1,2,n\}.
\]
\end{proof}

\medskip

This restriction already shows that the Catalan freedom collapses: 
instead of $n$ being allowed in any position $1 \le k \le n$, 
only three positions remain viable.

\subsection{Structural Lemmas and Forced Configurations}

Throughout this section we work in the class
\[
A_n^{(2)} :=
\{\pi \in S_n : \pi \text{ avoids } 132
\text{ and } |\pi_{i+1}-\pi_i| \le 2 \text{ for all } i \}.
\]

The key structural input is the classical maximum decomposition for
$132$-avoiders (Lemma~\ref{lem:132-max}):
if $n$ occurs in position $k$, then every element to the left of $n$
is greater than every element to the right of $n$,
and both sides are themselves $132$-avoiding.

In addition, the adjacency constraint severely limits which values
may appear next to $n$.
Since
\[
|\pi_{i+1}-\pi_i| \le 2,
\]
any entry adjacent to $n$ must lie in $\{n-1,n-2\}$.

These two facts together force strong local configurations
depending on the position of $n$.

\medskip

By Corollary~\ref{lem:m2-pos}, when $m=2$
the maximum element $n$ may occur only in positions
\[
\{1,2,n\}.
\]
We analyze each case separately.

\subsection{State Decomposition}

Accordingly, we partition
\[
A_n^{(2)} = B_n \sqcup C_n \sqcup D_n,
\]
where
\[
B_n := \{\pi \in A_n^{(2)} : \pi_1 = n\},
\qquad
C_n := \{\pi \in A_n^{(2)} : \pi_2 = n\},
\qquad
D_n := \{\pi \in A_n^{(2)} : \pi_n = n\}.
\]

Hence
\begin{equation}\label{eq:m2-split}
A_n = |B_n| + |C_n| + |D_n|.
\end{equation}
such that $n \ge 3$.

We now describe the structure of each class directly,
using only adjacency and the $132$-maximum decomposition.

\subsubsection{The class $D_n$: permutations ending in $n$}

\noindent
Recall that
\[
  D_n := \bigl\{\,\pi \in \mathcal{A}^{(2)}_n : \pi_n = n\,\bigr\}.
\]

\begin{definition}[Type $A$ and Type $B$ permutations]
\label{def:types}
Fix $n \ge 2$. For each admissible index $p$, define:
\begin{itemize}
  \item \textbf{Type $A(p)$}, for $1 \le p \le \lfloor{n/2}\rfloor$: the permutation whose prefix is
  \[
    (2p-1,\,2p-3,\,\ldots,\,3,\,1),
  \]
  a strictly decreasing sequence of odd integers.

  \item \textbf{Type $B(p)$}, for $1 \le p \le \lfloor{(n-1)/2}\rfloor$: the permutation whose prefix is
  \[
    (2p,\,2p-2,\,\ldots,\,2,\,1),
  \]
  a strictly decreasing sequence of even integers from $2p$ down to $2$, followed by $1$.
\end{itemize}
In both cases, the suffix consists of the remaining elements of $\{1,\ldots,n-1\}$ arranged in increasing order, followed by $n$.
\end{definition}

\begin{lemma}[Monotonicity of the suffix]
\label{lem:suffix}
Let $\pi \in D_n$, and suppose $\pi_p = 1$. Then
\[
\pi_{p+1} < \pi_{p+2} < \cdots < \pi_{n-1}.
\]
Consequently, the suffix is uniquely determined as the increasing arrangement of the unused elements of $\{1,\ldots,n-1\}$.
\end{lemma}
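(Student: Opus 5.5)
We must show that after 1 appears at position $p$, the entries $\pi_{p+1},\dots,\pi_{n-1}$ increase. The argument uses only $132$-avoidance (with the entry $1$ playing the role of the ``1''), plus the observation that the final entry is $n$. No adjacency information is needed for the monotonicity; adjacency matters only for pinning down the prefix, which is the job of the Type $A$/$B$ classification.

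\textbf{Step 1 (monotonicity).} Suppose the suffix is not increasing. Then there is an index $j$ with $p+1 \le j < n-1$ and $\pi_j > \pi_{j+1}$. Take the triple at positions $p < j < j+1$, with values $1, \pi_j, \pi_{j+1}$. Because $\pi_{j+1} \neq 1$, we have $1 < \pi_{j+1} < \pi_j$. This is exactly an occurrence of $132$, which contradicts $\pi \in D_n$. Hence every consecutive pair in the suffix strictly increases, and so $\pi_{p+1} < \cdots < \pi_{n-1}$.

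There is a degenerate case: if $p = n-1$ or $p = n-2$, the suffix has at most one entry and the claim is vacuous. The case $p = n$ is impossible because $\pi_n = n \neq 1$ (for $n \ge 2$).

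\textbf{Step 2 (uniqueness).} The values occupying positions $p+1,\dots,n-1$ are exactly the set
\[
\{1,\dots,n-1\} \setminus \{\pi_1,\dots,\pi_p\},
\]
since $\pi_n = n$. A set has only one increasing arrangement, so once the prefix $(\pi_1,\dots,\pi_p)$ is fixed, the suffix is forced. This is the ``consequently'' clause of the lemma.

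The only point needing care is the bookkeeping in Step 2: the final entry $n$ must be excluded from the suffix set. Beyond that there is no real obstacle.
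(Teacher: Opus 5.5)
Your proof is correct, but it follows a different route from the paper's, and yours is the one that actually works. You use the entry $1$ at position $p$ as the smallest letter of the pattern. A descent $\pi_j>\pi_{j+1}$ in the suffix then sits at positions $p<j<j+1$ with values $1<\pi_{j+1}<\pi_j$, which is a genuine occurrence of $132$.

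The paper instead pairs the descent $\pi_q>\pi_{q+1}$ with the final entry $\pi_n=n$ and calls $(\pi_{q+1},\pi_q,n)$ a $132$-pattern. But at positions $q<q+1<n$ the values are in relative order $2,1,3$. That is an occurrence of $213$, not $132$, so the inference fails. The paper's argument also never uses the hypothesis $\pi_p=1$, which points to the same problem. Its reasoning would rule out every descent occurring before $n$, including those in the strictly decreasing prefix. Yet $(3,1,2,4)\in D_4$ has the descent $3>1$ followed by $4$ and still avoids $132$.

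So your argument is what actually establishes the lemma. Your remark that adjacency plays no role here is accurate. So is your bookkeeping that the suffix set is $\{1,\dots,n-1\}\setminus\{\pi_1,\dots,\pi_p\}$ because $\pi_n=n$.
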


\begin{proof}
If $\pi_q > \pi_{q+1}$ for some $p < q \le n-2$, then since $\pi_n = n > \pi_q > \pi_{q+1}$, the triple $(\pi_{q+1}, \pi_q, n)$ forms a $132$-pattern, a contradiction.
\end{proof}

\begin{lemma}[Structure of the prefix]
\label{lem:prefix}
Let $\pi \in D_n$, and suppose $\pi_p = 1$. Then the prefix $\pi_1,\ldots,\pi_p$ is strictly decreasing, and each consecutive difference has size either $1$ or $2$. Moreover,
\[
\pi_{p+1} \in \{2,3\}.
\]
\end{lemma}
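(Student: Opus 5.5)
The plan is to treat the three assertions in the order that lets each feed the next: first the claim about $\pi_{p+1}$, then strict decrease of the prefix, then the step sizes. Throughout I assume $p \le n-2$, so that $\pi_{p+1}$ is not $n$ itself; if $p=n-1$, the adjacency $|n-1|\le 2$ forces $n\le 3$, and these cases are checked by hand.

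\textbf{Step 1: the entry after $1$.} Lemma~\ref{lem:suffix} says $\pi_{p+1},\dots,\pi_{n-1}$ is the increasing arrangement of the values of $\{1,\dots,n-1\}$ not used in the prefix. Hence $\pi_{p+1}$ is the smallest such unused value. Adjacency with $\pi_p=1$ gives $\pi_{p+1}\le 3$, and $\pi_{p+1}\neq 1$, so $\pi_{p+1}\in\{2,3\}$. This step also records a fact used below: at least one of $2,3$ lies in the suffix, and every unused value lies in the suffix, to the right of $1$.

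\textbf{Step 2: the prefix has no ascent.} Suppose $\pi_q<\pi_{q+1}$ for some $q<p$, and take $q$ to be the last such index. Then $\pi_{q+1}>\pi_{q+2}>\cdots>\pi_p=1$ is a decreasing run with steps of size at most $2$.
\begin{itemize}
\item Because the steps are at most $2$, the run skips at most one value at a time.
\item The value $\pi_q$ lies strictly between $\pi_{q+1}$ and $1$, since $\pi_q\ge 2$ as $1$ sits at position $p>q$. The run does not contain $\pi_q$, so it must jump over it, and therefore it passes through $\pi_q+1$.
\item If $\pi_q+1\ne\pi_{q+1}$, then $\pi_q+1$ occurs at a later position, and $(\pi_q,\pi_{q+1},\pi_q+1)$ is a $132$-pattern, a contradiction.
\item So every last ascent is of the form $\pi_{q+1}=\pi_q+1$, and the run then jumps from $\pi_q+1$ to $\pi_q-1$.
\end{itemize}
More generally, no value strictly between $\pi_q$ and $\pi_{q+1}$ may appear after position $q+1$, whether in the prefix or in the suffix.

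\textbf{Step 3: excluding the residual case.} This is the step I expect to be the main obstacle. In the residual configuration both $\pi_q$ and $\pi_q+1$ lie in the prefix, and the run drops by $2$ immediately afterwards. I would argue from the small values, using Step 1. If $\pi_q\ge 3$, the run continues from $\pi_q-1$ down to $1$, and I would show that the values forced into the prefix, together with the choice of $q$ as the last ascent, leave the suffix unable to supply a consecutive-entry chain from $1$ to $n$ with steps $\le 2$. If $\pi_q=2$, then $3$ precedes $1$, and the run goes $2,3,1$. Then $2$ and $3$ both lie in the prefix, so $\pi_{p+1}\ge 4$, contradicting Step 1. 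This is exactly the obstruction seen in the small example $2,3,1,4$. I would first try to reduce the case $\pi_q\ge 3$ to this one by induction on $\pi_q$, looking at the earlier portion of the prefix, which by the $132$ condition must again avoid leaving gaps that the increasing suffix cannot bridge.

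Once the prefix is shown to be strictly decreasing, the step-size claim is immediate. Each consecutive difference is a nonzero integer of absolute value at most $2$, and it is negative, so it has size $1$ or $2$.
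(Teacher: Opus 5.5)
\textbf{Gap in Step 3.} Your Steps 1 and 2 are sound. In particular, the reduction showing that a last ascent before the $1$ must have the form $\pi_{q+1}=\pi_q+1$, $\pi_{q+2}=\pi_q-1$ is correct, and so is your treatment of the case $\pi_q=2$. The gap is Step 3 for $\pi_q\ge 3$, which is the main case of strict decrease. There you only announce that the suffix will be ``unable to supply a chain'' and propose an unspecified induction on $\pi_q$, so nothing is actually proved.

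\textbf{The missing idea.} A one-line crossing argument closes the gap. Put $a=\pi_q$. Both $a$ and $a+1$ sit at positions $q,q+1<p$, and $1<a<a+1<n$, the last inequality because $\pi_{q+1}\neq n$. The entries $\pi_p,\pi_{p+1},\dots,\pi_n$ run from $1$ to $n$ with all steps of size at most $2$ and never use $a$ or $a+1$. Let $r\ge p$ be the last index with $\pi_r<a$. Then $\pi_r\le a-1$ and $\pi_{r+1}\ge a+2$, a step of size at least $3$. With Lemma~\ref{lem:suffix} available this is even quicker: the increasing sequence $1,\pi_{p+1},\dots,\pi_{n-1},n$ omits the two consecutive values $a,a+1$, so it must jump by at least $3$ somewhere. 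Your case $\pi_q=2$ is just the instance where this crossing happens at the very first step after $1$.

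\textbf{Step 2's reduction is unnecessary.} Take any ascent $\pi_q<\pi_{q+1}$ with $q<p$. By your own $132$ observation, no value strictly between $\pi_q$ and $\pi_{q+1}$ occurs after position $q+1$, and the two endpoints are already used. So the chain from $1$ up to $n$ after position $p$ would have to jump across the whole interval $[\pi_q,\pi_{q+1}]$, a step of size at least $\pi_{q+1}-\pi_q+2\ge 3$.

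\textbf{Do not patch Step 3 with the paper's argument.} The paper asserts that $(\pi_i,\pi_{i+1},n)$ is a $132$-pattern. But $\pi_i<\pi_{i+1}<n$ with $n$ in the last position is an occurrence of $123$, not $132$. So an argument that genuinely uses the adjacency bound, like the one above, is needed.

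\textbf{Minor point on Step 1.} You do not need Lemma~\ref{lem:suffix} there. Since $\pi_{p+1}\neq 1$ and $|\pi_{p+1}-1|\le 2$, you get $\pi_{p+1}\in\{2,3\}$ directly whenever $p<n$.
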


\begin{proof}
If $\pi_i < \pi_{i+1}$ for some $i < p$, then $(\pi_i,\pi_{i+1},n)$ forms a $132$-pattern, a contradiction. Thus the prefix is strictly decreasing.

The adjacency condition forces each difference to have magnitude at most $2$, so each step is of size $1$ or $2$. Finally, since $\pi_{p+1} > 1$ and $|\pi_{p+1}-1|\le 2$, we obtain $\pi_{p+1}\in\{2,3\}$.
\end{proof}

\begin{lemma}[Restriction on size-$1$ steps]
\label{lem:size1}
Let $\pi \in D_n$, and suppose $\pi_p=1$. Then a step of size $1$ in the prefix can occur only as the final step $2 \to 1$.
\end{lemma}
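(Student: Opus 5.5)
We argue by contradiction. Suppose the strictly decreasing prefix $\pi_1 > \pi_2 > \cdots > \pi_p = 1$ (Lemma~\ref{lem:prefix}) contains a step of size $1$ somewhere other than a final step $2 \to 1$. Say $\pi_i = a+1$ and $\pi_{i+1} = a$ with $i+1 \le p$ and $a \ge 2$. (If $a = 1$, the step is $2 \to 1$ and is exactly the permitted final step.) The plan is to show that some value below $a$ must then be skipped by the prefix and can never be reached in the suffix without violating the adjacency bound or creating a $132$-pattern.

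\textbf{Step 1: the prefix must skip a value below $a$.} The prefix descends from $a$ to $1$ using steps of size $1$ or $2$, so it skips values below $a$ only by steps of size $2$. Consider the stretch from $a$ down to $1$.

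\begin{itemize}
\item If every step there has size $2$, then $a$ is odd and every even value below $a$ is skipped. In particular $a-1 \ge 1$ is skipped; since $a \ge 2$ and $a$ is odd, $a \ge 3$, so $a-1 \ge 2$.
\item If some step there has size $1$, we apply the same argument to the \emph{last} size-$1$ step above $2\to1$. So we may assume all steps strictly between $a$ and the end are size $2$, except possibly a final $2 \to 1$.
\end{itemize}

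In either case there is a skipped value $s$ with $2 \le s < a$. The cleanest choice is the value just below $a$ in the skipped sequence, namely $s = a-1$ when the step out of $a$ has size $2$.

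\textbf{Step 2: place the skipped value and the pair.} The skipped value $s$ lies in the suffix, after $1$ and before $n$. The values $a+1$ and $a$ appear before $1$. The suffix is increasing by Lemma~\ref{lem:suffix} and consists of all unused values of $\{1,\dots,n-1\}$. The skipped value $s = a-1$ is therefore in the suffix, but so are all values greater than $\pi_1$ and other skipped values.

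\textbf{Step 3: find the contradiction.} The prefix alone does not seem to produce a $132$-pattern, since its elements exceed $s$ in the wrong order: $(x, y, s)$ with $x > y > s$ is a $321$-pattern, not a $132$-pattern. The contradiction must instead come from adjacency in the suffix. The suffix starts at $\pi_{p+1} \in \{2,3\}$ and increases with steps of size at most $2$. It must therefore pass through the values near $a-1$ and $a+2$, but $a$ and $a+1$ are both already used, creating a gap of size at least $3$ from $a-1$ up to $a+2$. If $a-1$ is in the suffix, the next suffix element is at least $a+2$, and $|(a+2)-(a-1)| = 3 > 2$ is a violation. If instead $a-1$ is in the prefix, then the step out of $a$ had size $1$, and we descend to the last consecutive pair in the run; this is where the choice in Step 1 matters. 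The suffix must also pass the gap to reach values above $\pi_1$, or reach $n$ itself. If the suffix contains nothing above $a+1$, the last suffix element is at most $a-1$ and is followed by $n$, so we need $n-(a-1) \le 2$, which contradicts $n > \pi_1 \ge a+1$ unless $n = a+1$. That last possibility is impossible because $n$ is at position $n$, not in the prefix.

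\textbf{Main obstacle.} The main obstacle is choosing the pair $(a+1, a)$ so that $a-1$ is guaranteed to be skipped by the prefix. Taking the lowest size-$1$ step with $a \ge 2$ works: the step out of $a$ then has size $2$, unless it is the final $2 \to 1$, which $a \ge 2$ excludes (that case would make $a = 2$ with a size-$1$ step to $1$, contradicting minimality only if that is the final step). With that choice, the gap $\{a, a+1\}$ of used values forces a jump of size at least $3$ in the suffix or into $n$.
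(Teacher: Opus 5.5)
Your core mechanism is the right one: two used values $a,a+1$ in the prefix force the increasing tail to jump by at least $3$. For $a\ge 3$ your argument goes through. Minimality makes the step out of $a$ have size $2$, so $a-1\ge 2$ lies in the suffix, and its successor (a later suffix entry or $n$) is at least $a+2$. However, the configuration $a=2$, a prefix ending $\ldots,3,2,1$, is never refuted. Step~1 asserts that a skipped $s$ with $2\le s<a$ always exists. The ``Main obstacle'' paragraph says the step out of $a$ has size $2$ ``unless it is the final $2\to1$, which $a\ge2$ excludes.'' It does not exclude it: for $a=2$ the step out of $a$ \emph{is} the final $2\to1$. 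So $a-1=1=\pi_p$ is neither skipped nor a suffix entry, and no $s$ with $2\le s<2$ exists. Your Step~3 branch ``$a-1$ is in the prefix \dots\ descend to the last consecutive pair'' only leads back to the permitted step.

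The fix is to stop requiring that $a-1$ be skipped. By Lemma~\ref{lem:suffix} the tail $1=\pi_p<\pi_{p+1}<\cdots<\pi_n=n$ is increasing, and it omits $a$ and $a+1$. It contains a value below $a$ (namely $1$) and a value above $a+1$ (namely $n$, since $n>\pi_1\ge a+1$). So some consecutive pair has $\pi_j\le a-1$ and $\pi_{j+1}\ge a+2$, a jump of at least $3$. This handles every $a\ge2$ at once; for $a=2$ it just says the entry after $1$ would have to be at least $4$. It also covers an empty suffix and makes your Step~1 and the minimality choice unnecessary.

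This is a genuinely different route from the paper's. The paper uses only the junction constraint ``the first suffix entry is the smallest skipped value, hence in $\{2,3\}$,'' with cases $\pi_i\ge5$ and $\pi_i=4$ plus a parity remark. Your route uses adjacency along the whole tail. The paper's route does not actually close. A skipped value $\ge4$ in the suffix is compatible with $\pi_{p+1}\in\{2,3\}$; both hold in $5,3,1,2,4,6\in D_6$. The case $\pi_i=3$, the same $3,2,1$ configuration you missed, is not treated there either. Patched as above, your tail-crossing argument gives a complete and cleaner proof.
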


\begin{proof}
By Lemma~\ref{lem:prefix}, every step in the prefix has size $1$ or $2$. Suppose that
\[
\pi_i-\pi_{i+1}=1
\qquad\text{for some } i<p-1.
\]
Since the prefix is strictly decreasing, all subsequent prefix entries are strictly less than $\pi_{i+1}=\pi_i-1$. Hence the value $\pi_i-2$ is skipped at that stage, and every skipped value must appear in the suffix. By Lemma~\ref{lem:suffix}, the suffix is increasing, so its first entry is the smallest skipped value. On the other hand, Lemma~\ref{lem:prefix} gives $\pi_{p+1}\in\{2,3\}$.

If $\pi_i\ge 5$, then $\pi_i-2\ge 3$, and after the step $\pi_i\to\pi_i-1$ the prefix must still descend to $1$ in at least one further step. Any such continuation forces some skipped value at least $4$ to appear in the suffix, contradicting the fact that the first suffix entry must be $2$ or $3$.

The only remaining possibility is $\pi_i=4$ and $\pi_{i+1}=3$. In that case the next prefix entry must be $1$, since the prefix is decreasing and all steps have size at most $2$. Thus the local pattern is $4,3,1$. But this is impossible: if the earlier prefix reaches $4$ by successive steps of size $2$ from an odd starting value, then all values encountered before $4$ are odd, so $4$ cannot occur; while if it reaches $4$ from an even starting value, then all values encountered before the step $4\to 3$ are even, and after the step $3\to 1$ the parity pattern is broken by the intermediate size-$1$ step. Therefore such a configuration cannot occur.

Hence any size-$1$ step in the prefix must be terminal, i.e.\ the final step $2\to 1$.
\end{proof}

\begin{proposition}[Classification of $D_n$]
\label{lem:Dn-structure}
For every $n \ge 2$, $D_n$ consists exactly of the Type $A(p)$ permutations for $1 \le p \le \lfloor{n/2}\rfloor$ and the Type $B(p)$ permutations for $1 \le p \le \lfloor{(n-1)/2}\rfloor$.
\end{proposition}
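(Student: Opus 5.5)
The plan is to prove the two inclusions separately. First, every Type $A(p)$ and Type $B(p)$ permutation in the stated ranges lies in $D_n$. Second, every $\pi\in D_n$ is one of them. Distinct labels give distinct permutations, since the prefixes differ in length or in parity pattern, so $D_n$ is exactly the union of these families.

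\textbf{Every listed permutation lies in $D_n$.} Take $A(p)$ with $1\le p\le\lfloor n/2\rfloor$.
\begin{itemize}
\item \emph{It is a permutation.} The prefix is $(2p-1,2p-3,\dots,1)$, whose largest entry satisfies $2p-1\le n-1$. The suffix is $(2,4,\dots,2p-2,2p,2p+1,\dots,n-1)$ followed by $n$, where the block $2p,\dots,n-1$ is empty when $2p=n$. So we have a permutation of $[n]$ ending in $n$.
\item \emph{Adjacency.} Prefix steps have size $2$, and the step $1\to 2$ has size $1$. Inside the suffix the steps are $2$ on the even part and then $1$. The final step into $n$ comes from $n-1$, or from $n-2$ when $2p=n$.
\item \emph{The $B(p)$ case.} It is analogous. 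The prefix is $(2p,\dots,2,1)$ with $2p\le n-1$, the step $1\to 3$ has size $2$, and the suffix is $3,5,\dots,2p-1,2p+1,2p+2,\dots,n-1,n$.
\item \emph{$132$-avoidance.} Both types are a decreasing prefix followed by an increasing suffix ending in $n$. Suppose $a<c<b$ occur in positions $a,b,c$. Then $b$ and $c$ cannot both lie in the increasing suffix, so $b$ lies in the decreasing prefix. But then $a$, which precedes $b$, also lies in the prefix and exceeds $b$, a contradiction.
\end{itemize}

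\textbf{Every $\pi\in D_n$ is of one of the two types.} Let $\pi\in D_n$ with $\pi_p=1$.
\begin{itemize}
\item By Lemma~\ref{lem:prefix}, $\pi_1,\dots,\pi_p$ is strictly decreasing with steps of size $1$ or $2$.
\item By Lemma~\ref{lem:size1}, every step has size exactly $2$, except possibly a terminal step $2\to1$.
\item If there is no size-$1$ step, the prefix is $1,3,5,\dots$ read backwards, namely $(2q-1,\dots,3,1)$ with $q=p$.
\item If the final step is $2\to1$, the part before it descends by $2$ to $2$, so the prefix is $(2q,\dots,2,1)$ with $p=q+1$.
\item By Lemma~\ref{lem:suffix}, the entries $\pi_{p+1},\dots,\pi_{n-1}$ are the remaining values of $[n-1]$ in increasing order, which is exactly the suffix in Definition~\ref{def:types}.
\item The ranges of $q$ come from requiring all prefix entries to lie in $[n-1]$. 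For the odd prefix this is $2q-1\le n-1$, that is, $q\le\lfloor n/2\rfloor$. For the even prefix it is $2q\le n-1$, that is, $q\le\lfloor (n-1)/2\rfloor$.
\end{itemize}
Degenerate cases fit the same scheme. When $p=1$ the prefix is $(1)$, which is $A(1)$, the identity. When $n=2$ this is the only element of $D_2$.

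The substantive step is the use of Lemma~\ref{lem:size1}, which forces all non-terminal steps to have size $2$; I take it as given. Apart from that, I would double-check the boundary cases where the suffix's even or odd run meets $n$ directly, such as $2p=n$ in Type $A$, to confirm that the adjacency step into $n$ is still at most $2$.
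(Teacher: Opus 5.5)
Your proposal is correct and is essentially the paper's own proof: the forward inclusion is read off from Lemmas~\ref{lem:prefix}, \ref{lem:size1} and~\ref{lem:suffix}, and the converse is the same direct check of adjacency and $132$-avoidance using the decreasing-prefix/increasing-suffix shape (your separate index $q$ for Type $B$, whose $1$ sits in position $q+1$, is cleaner than the paper's use of $p$ for two different quantities). Those lemmas carry all the content and you take them as given, so note that the paper's proofs of Lemmas~\ref{lem:suffix} and~\ref{lem:prefix} are unsound: the triples they invoke (positions $q<q+1<n$ with $\pi_q>\pi_{q+1}$, and positions $i<i+1<n$ with $\pi_i<\pi_{i+1}$) are occurrences of $213$ and $123$, not $132$. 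The lemmas are nevertheless true, and a self-contained argument would instead observe that a final $n$ lies in no $132$, so that $D_n$ is obtained by appending $n$ to the members of $A^{(2)}_{n-1}$ ending in $n-1$ or $n-2$, and then induct on $n$.
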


\begin{proof}
Let $\pi \in D_n$, and suppose $\pi_p=1$. By Lemmas~\ref{lem:suffix}--\ref{lem:size1}, the prefix is a strictly decreasing sequence from some initial value down to $1$, consisting of steps of size $2$ except possibly for a final step of size $1$.

If all steps have size $2$, then the prefix has the form
\[
(2p-1,\,2p-3,\,\ldots,\,3,\,1),
\]
which is Type $A(p)$. Since the first entry is $2p-1 \le n-1$, this requires $p\le \lfloor{n/2}\rfloor$.

If the final step has size $1$, then all earlier steps have size $2$, so the prefix has the form
\[
(2p,\,2p-2,\,\ldots,\,2,\,1),
\]
which is Type $B(p)$. Since the first entry is $2p \le n-1$, this requires $p\le \lfloor{(n-1)/2}\rfloor$.

Thus every $\pi\in D_n$ is of Type $A$ or Type $B$.

Conversely, let $\pi$ be of Type $A$ or Type $B$. The adjacency condition holds within the prefix and suffix by construction. At the boundary, the step from $1$ to the first suffix entry is $1\to 2$ in Type $A$ and $1\to 3$ in Type $B$, of sizes $1$ and $2$ respectively, so adjacency is preserved.

To verify $132$-avoidance, note that no such pattern can lie entirely within the prefix or suffix. If $i<j<k$ with $i,j\le p<k$, then $\pi_i>\pi_j$, so $\pi_i<\pi_k<\pi_j$ is impossible. If $i\le p<j<k$, then $\pi_j<\pi_k$, so $\pi_i<\pi_k<\pi_j$ is impossible. Finally, if $k=n$, then $\pi_n=n$ is maximal, so no $132$-pattern occurs.

Thus $\pi\in D_n$, completing the classification.
\end{proof}

\begin{corollary}[Enumeration of $D_n$]
\label{cor:Dn-count}
For every $n \ge 2$,
\[
|D_n| = \lfloor{\tfrac{n}{2}}\rfloor + \lfloor{\tfrac{n-1}{2}}\rfloor = n-1.
\]
\end{corollary}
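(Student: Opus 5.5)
The plan is to read the count directly off Proposition~\ref{lem:Dn-structure} and then check the floor identity. By that classification, every element of $D_n$ is a Type $A(p)$ permutation with $1\le p\le\lfloor n/2\rfloor$ or a Type $B(p)$ permutation with $1\le p\le\lfloor (n-1)/2\rfloor$, and each of these belongs to $D_n$. So $|D_n|$ is the number of distinct permutations in this list. Counting it takes two steps: show the listed permutations are pairwise distinct, then evaluate the sum of the two floors.

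\textbf{Distinctness.} By Definition~\ref{def:types}, each listed permutation is determined by its prefix ending at the entry $1$, because the suffix is then forced to be the increasing arrangement of the unused values followed by $n$. So it suffices to compare prefixes.
\begin{itemize}
\item Two Type $A$ prefixes $A(p)$ and $A(p')$ with $p\neq p'$ have different lengths, so they differ.
\item The same length argument separates $B(p)$ from $B(p')$ when $p\neq p'$.
\item A Type $A$ prefix begins with the odd value $2p-1$, while a Type $B$ prefix begins with the even value $2p$. So no Type $A$ permutation coincides with a Type $B$ one.
\end{itemize}
Hence the sets $\{A(p)\}$ and $\{B(p)\}$ are disjoint, each has exactly as many elements as its index range, and
\[
|D_n|=\lfloor n/2\rfloor+\lfloor (n-1)/2\rfloor .
\]

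\textbf{The floor identity.} Split by the parity of $n$:
\begin{itemize}
\item If $n=2r$, the sum is $r+(r-1)=2r-1=n-1$.
\item If $n=2r+1$, the sum is $r+r=2r=n-1$.
\end{itemize}
Both cases give $n-1$.

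None of these steps presents a real obstacle, since the substantive work was done in the classification. The one point needing care is the edge case $n=2$. There $B$ has an empty index range, and $A(1)$ is the identity $(1,2)$, giving $|D_2|=1=n-1$. This case should be checked directly to confirm the conventions of Definition~\ref{def:types} hold at small $n$.
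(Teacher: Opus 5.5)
Your proposal is correct and follows the paper's proof. You read the count off Proposition~\ref{lem:Dn-structure}, separate Type $A$ from Type $B$ by the parity of the first entry, and check that $\lfloor n/2\rfloor+\lfloor (n-1)/2\rfloor=n-1$, adding only details the paper leaves implicit: distinctness within each family, the explicit parity split, and the $n=2$ case.
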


\begin{proof}
The Type $A$ permutations are indexed by $1\le p\le \lfloor{n/2}\rfloor$, and the Type $B$ permutations by $1\le p\le \lfloor{(n-1)/2}\rfloor$. These families are disjoint since their prefixes begin with values of opposite parity. Hence
\[
|D_n|=\lfloor{\tfrac{n}{2}}\rfloor+\lfloor{\tfrac{n-1}{2}}\rfloor.
\]
A direct check shows this equals $n-1$.
\end{proof}

\subsubsection{The class $C_n$: maximum in position $2$}

\begin{lemma}\label{lem:Cn-structure}
For all $n \ge 3$,
\[
|C_n| = |B_{n-2}|.
\]
\end{lemma}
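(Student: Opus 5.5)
The plan is to show that every $\pi \in C_n$ begins with the forced prefix $(n-1,\,n,\,n-2)$, and that deleting the first two entries gives a bijection $C_n \to B_{n-2}$.

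\textbf{Step 1: the forced prefix.} Let $\pi \in C_n$ with $n \ge 3$, so $\pi_2 = n$ and $L = (\pi_1)$. By Lemma~\ref{lem:132-max} and its corollary, $L$ consists of the largest $|L| = 1$ element of $\{1,\dots,n-1\}$, so $\pi_1 = n-1$. Next, $\pi_3$ is adjacent to $n$, so $\pi_3 \in \{n-1,n-2\}$ by the adjacency bound. Since $n-1$ is already used, $\pi_3 = n-2$. Hence every $\pi \in C_n$ has the form
\[
\pi = (n-1,\, n,\, n-2,\, \pi_4, \dots, \pi_n).
\]

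\textbf{Step 2: the map $\Phi$.} Define $\Phi(\pi) = (\pi_3,\pi_4,\dots,\pi_n)$. This is a permutation of $\{1,\dots,n-2\}$, so no standardization is needed. Its first entry is $n-2$, its maximum. It avoids $132$, since any occurrence in $\Phi(\pi)$ would be an occurrence in $\pi$. Its consecutive pairs are exactly consecutive pairs of $\pi$, so the bound $|\sigma_{i+1}-\sigma_i|\le 2$ is inherited. Hence $\Phi(\pi) \in B_{n-2}$. The map $\Phi$ is injective because the deleted prefix $(n-1,n)$ is the same for all $\pi \in C_n$.

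\textbf{Step 3: the inverse.} Given $\sigma \in B_{n-2}$, set $\pi = (n-1, n, \sigma_1, \dots, \sigma_{n-2})$. The two new adjacent differences are $|n-(n-1)| = 1$ and $|n - \sigma_1| = |n-(n-2)| = 2$, so the adjacency condition holds. For $132$-avoidance I would check where the new entries could sit in a pattern $\pi_i<\pi_k<\pi_j$, $i<j<k$, using that no pattern lies entirely inside $\sigma$:
\begin{itemize}
\item The entry $n$ cannot be the first or third element of a pattern, since it is the maximum. As the middle element it would force the first element to be $n-1$ and the third to lie strictly between $n-1$ and $n$, which is impossible.
\item The entry $n-1$ sits in position $1$, so it could only be the first element. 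That would require two later values exceeding $n-1$, but only $n$ does.
\end{itemize}
So $\pi \in C_n$ and $\Phi(\pi) = \sigma$. Thus $\Phi$ is a bijection and $|C_n| = |B_{n-2}|$.

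The only delicate point is the boundary case $n=3$. Here $B_1 = \{(1)\}$ and $C_3 = \{(2,3,1)\}$, which is consistent. The main obstacle is really Step 1: one must use Lemma~\ref{lem:132-max} to force $\pi_1 = n-1$, and not rely on adjacency alone, which would still allow $\pi_1 = n-2$.
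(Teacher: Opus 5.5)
Your proof is correct and follows the paper's argument: you force the prefix $(n-1,n,n-2)$, then delete or prepend the entries $(n-1,n)$ to obtain a bijection $C_n \leftrightarrow B_{n-2}$, with the same case check for $132$-avoidance of the inverse. The only difference is how you force $\pi_1=n-1$: you read it off directly from Lemma~\ref{lem:132-max} (so $L=\{n-1\}$), whereas the paper uses adjacency to get $\pi_1\in\{n-1,n-2\}$ and then excludes $n-2$ by exhibiting the pattern $(n-2,n,n-1)$; both are valid, and yours is slightly more direct.
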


\begin{proof}
Let $\pi \in C_n$, so $\pi_2 = n$.

By the adjacency condition,
\[
|\pi_2-\pi_1| \le 2,
\]
and since $\pi_1 \neq n$, we have
\[
\pi_1 \in \{n-1,n-2\}.
\]

We first rule out the possibility $\pi_1 = n-2$. In that case, adjacency at position $2$ forces
\[
|\pi_3-n| \le 2,
\]
so $\pi_3 \in \{n-1,n-2\}$. Since $\pi_1=n-2$, we must have $\pi_3=n-1$. But then the triple
\[
(\pi_1,\pi_2,\pi_3)=(n-2,n,n-1)
\]
occupies positions $1<2<3$ and satisfies
\[
n-2<n-1<n,
\]
so it is a $132$-pattern, contradicting $\pi \in \mathcal{A}^{(2)}_n$.

Therefore $\pi_1=n-1$. Applying adjacency again at position $2$, we have
\[
|\pi_3-n| \le 2,
\]
so $\pi_3 \in \{n-1,n-2\}$. Since $\pi_1=n-1$, it follows that $\pi_3=n-2$. Hence every permutation in $C_n$ begins with
\[
(n-1,n,n-2).
\]

Define
\[
\phi : C_n \to B_{n-2}
\]
by
\[
\phi(\pi_1,\pi_2,\pi_3,\pi_4,\dots,\pi_n)
:=
(\pi_3,\pi_4,\dots,\pi_n).
\]
Because every $\pi \in C_n$ begins with $(n-1,n,n-2)$, the image is
\[
\phi(\pi)=(n-2,\pi_4,\dots,\pi_n),
\]
which is a permutation of $\{1,\dots,n-2\}$. Moreover, $\phi(\pi)$ lies in $\mathcal{A}^{(2)}_{n-2}$: the adjacency condition is inherited from the consecutive entries $\pi_3,\dots,\pi_n$, and $132$-avoidance is inherited from the fact that any subsequence of a $132$-avoiding permutation also avoids $132$. Since its first entry is $n-2$, we have $\phi(\pi)\in B_{n-2}$.

Conversely, define
\[
\psi : B_{n-2} \to C_n
\]
by
\[
\psi(\sigma_1,\sigma_2,\dots,\sigma_{n-2})
:=
(n-1,n,\sigma_1,\sigma_2,\dots,\sigma_{n-2}).
\]
Since $\sigma \in B_{n-2}$, we have $\sigma_1=n-2$, so the first three entries of $\psi(\sigma)$ are
\[
(n-1,n,n-2).
\]
Thus $\psi(\sigma)$ satisfies the adjacency condition at the new boundary steps:
\[
|n-(n-1)|=1,
\qquad
|n-(n-2)|=2,
\]
and all later adjacency constraints are inherited from $\sigma$.

It remains to check $132$-avoidance. Any $132$-pattern contained entirely in the suffix $(\sigma_1,\dots,\sigma_{n-2})$ would already be a $132$-pattern in $\sigma$, impossible. A $132$-pattern involving the entry $n$ at position $2$ is also impossible: if $n$ were the largest entry of such a pattern, then we would need an index $i<2$, so necessarily $i=1$ with value $n-1$, but then there is no later entry strictly between $n-1$ and $n$. Finally, the entry $n-1$ at position $1$ cannot participate in a $132$-pattern, since all later entries other than $n$ are at most $n-2$. Hence $\psi(\sigma)\in C_n$.

The maps $\phi$ and $\psi$ are clearly inverse to one another, so they define a bijection
\[
C_n \cong B_{n-2}.
\]
Therefore
\[
|C_n| = |B_{n-2}|.
\]
\end{proof}

\subsubsection{The class $B_n$: maximum in position $1$}

\begin{lemma}\label{lem:Bn-structure}
For $n\ge 4$,
\[
|B_n| = |B_{n-1}| + |B_{n-3}| + 1,
\]
with initial values
\[
|B_1|=1,\quad |B_2|=1,\quad |B_3|=2.
\]
\end{lemma}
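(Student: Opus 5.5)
The plan is to strip off the leading maximum and reduce to the decomposition $A^{(2)}_{n-1}=B_{n-1}\sqcup C_{n-1}\sqcup D_{n-1}$, which is already understood.

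\textbf{Step 1 (removing $n$).} Let $\pi\in B_n$, so $\pi_1=n$. Adjacency at the first step forces $\pi_2\in\{n-1,n-2\}$. The entry $n$ in position $1$ can never belong to a $132$-pattern. It cannot play the role of the smallest entry, since it is maximal, and it cannot play the role of the middle or last entry, since nothing precedes it. Hence $\pi\mapsto\sigma=(\pi_2,\dots,\pi_n)$ is a bijection from $B_n$ onto
\[
\{\sigma\in A^{(2)}_{n-1}:\sigma_1\in\{n-1,n-2\}\}.
\]
For the inverse, prepending $n$ to such a $\sigma$ preserves adjacency, because $|n-\sigma_1|\le 2$, and preserves $132$-avoidance by the same remark.

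\textbf{Step 2 (splitting by $\sigma_1$).} If $\sigma_1=n-1$, then $\sigma\in B_{n-1}$. This contributes $|B_{n-1}|$, since every element of $B_{n-1}$ starts with $n-1$.

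If $\sigma_1=n-2$, the maximum $n-1$ of $\sigma$ is not in position $1$. By Corollary~\ref{lem:m2-pos} it lies in position $2$ or in position $n-1$; these positions are distinct because $n-1\ge 3$. So $\sigma\in C_{n-1}\sqcup D_{n-1}$.

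\textbf{Step 3 (counting the two remaining pieces).} For the $C_{n-1}$ part, the proof of Lemma~\ref{lem:Cn-structure} shows that every element of $C_{n-1}$ begins with $(n-2,n-1,n-3)$. Hence the condition $\sigma_1=n-2$ is automatic, and this part contributes $|C_{n-1}|=|B_{n-3}|$ by Lemma~\ref{lem:Cn-structure}, which applies since $n-1\ge 3$.

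For the $D_{n-1}$ part, use Proposition~\ref{lem:Dn-structure} with $n-1$ in place of $n$. An element of $D_{n-1}$ starts with $2p-1$ (Type $A(p)$) or with $2p$ (Type $B(p)$). Requiring the first entry to equal $n-2$ leaves exactly one candidate:
\begin{itemize}
\item if $n$ is odd, Type $A$ with $p=(n-1)/2$, which satisfies $p\le\lfloor (n-1)/2\rfloor$;
\item if $n$ is even, Type $B$ with $p=(n-2)/2$, which satisfies $p\le\lfloor (n-2)/2\rfloor$.
\end{itemize}
So this part contributes exactly $1$. Summing the three contributions gives $|B_n|=|B_{n-1}|+|B_{n-3}|+1$ for $n\ge4$.

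\textbf{Initial values.} These come from direct listing: $B_1=\{1\}$, $B_2=\{21\}$, and $B_3=\{321,312\}$. The permutation $312$ is admissible, since $|3-1|=2$, $|1-2|=1$, and it avoids $132$.

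\textbf{Main obstacle.} The delicate point is the bookkeeping in Step 2. One must check that the cases are genuinely disjoint and exhaustive, which needs $n-1\ge3$ so that positions $2$ and $n-1$ differ. One must also check that the parity constraint in the classification of $D_{n-1}$ yields exactly one permutation with first entry $n-2$, with the index bounds on $p$ actually satisfied in both parity cases.
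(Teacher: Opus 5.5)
Your argument is correct, but it takes a different route from the paper's.

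\textbf{The paper's route.} The paper stays inside $B_n$. It splits on $\pi_2\in\{n-1,n-2\}$ and then on $\pi_3\in\{n-1,n-3,n-4\}$. It builds bijections with $B_{n-1}$ (delete $n$) and with $B_{n-3}$ (delete the block $n,n-2,n-1$). It then rules out $\pi_3=n-3$ by adjacency and argues by forcing that $\pi_3=n-4$ yields only $\omega_n$.

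\textbf{Your route.} You delete just the leading $n$ and let the already established split $A^{(2)}_{n-1}=B_{n-1}\sqcup C_{n-1}\sqcup D_{n-1}$ do the work. The paper's Subcase 2a is exactly $n$ prepended to $C_{n-1}$. The extra $1$ becomes the unique element of $D_{n-1}$ beginning with $n-2$. This explains $\omega_n$ conceptually as $n$ followed by a V-shaped member of $D_{n-1}$.

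\textbf{What each buys.} Your approach is shorter, uniform for all $n\ge 4$, and avoids the paper's rather informal forcing argument. In fact the paper's subcase split is off at $n=4$. There $\omega_4=4213$ has $\pi_3=1=n-3$, so Subcase 2b is not empty (its argument appeals to a nonexistent $\pi_5$), while Subcase 2c is vacuous. Your route needs no boundary care.

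The price is that your recurrence is not self-contained. It rests on the full classification in Proposition~\ref{lem:Dn-structure}. It also rests on the fact, extracted from the proof of Lemma~\ref{lem:Cn-structure}, that every element of $C_{n-1}$ begins with $(n-2,n-1,n-3)$.

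\textbf{A caution about that dependency.} The printed proofs of Lemmas~\ref{lem:suffix} and~\ref{lem:prefix} treat triples ending in the final $n$ as $132$-occurrences. They never are: a final maximum could only sit in the last slot of the pattern, which in $132$ carries the middle value. The classification itself is true, and you can secure it with the mirror image of your own idea. Delete the final $n$, so that $D_n$ corresponds to the $\tau\in A^{(2)}_{n-1}$ ending in $n-1$ or $n-2$. Then split by the position of $n-1$:
\begin{itemize}
\item If $n-1$ is last, $\tau\in D_{n-1}$.
\item Position $2$ is impossible, since then $\tau$ begins with $n-2$.
\item If $n-1$ is in position $1$, then $\tau_2=n-3$ is forced, and $\tau_2\cdots\tau_{n-1}$ is an element of $D_{n-2}$ starting with $n-3$, unique by induction.
\end{itemize}
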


\begin{proof}
For the initial values, one checks directly that
\[
B_1=\{(1)\},\qquad
B_2=\{(2,1)\},\qquad
B_3=\{(3,2,1),(3,1,2)\},
\]
so indeed $|B_1|=1$, $|B_2|=1$, and $|B_3|=2$.

Now fix $n\ge 4$ and let $\pi\in B_n$, so $\pi_1=n$. By adjacency,
\[
\pi_2\in\{n-1,n-2\}.
\]

\smallskip
\noindent
\textbf{Case 1: \(\pi_2=n-1\).}

Define
\[
\phi_1:B_n^{(1)}\to B_{n-1},
\qquad
\phi_1(n,n-1,\pi_3,\dots,\pi_n):=(n-1,\pi_3,\dots,\pi_n),
\]
where \(B_n^{(1)}\subseteq B_n\) denotes the subset with \(\pi_2=n-1\).

This is well-defined: the image is a permutation of \(\{1,\dots,n-1\}\) beginning with \(n-1\), adjacency is inherited from the original permutation, and \(132\)-avoidance is inherited because any subsequence of a \(132\)-avoiding permutation also avoids \(132\).

Its inverse is
\[
\psi_1:B_{n-1}\to B_n^{(1)},
\qquad
\psi_1(\sigma_1,\dots,\sigma_{n-1}):=(n,\sigma_1,\dots,\sigma_{n-1}).
\]
Since \(\sigma_1=n-1\), the new boundary step satisfies \(|n-(n-1)|=1\), so adjacency is preserved, and no new \(132\)-pattern is created because the inserted entry \(n\) is the global maximum and appears in the first position. Thus \(B_n^{(1)}\cong B_{n-1}\), and this case contributes \(|B_{n-1}|\).

\smallskip
\noindent
\textbf{Case 2: \(\pi_2=n-2\).}

Then adjacency gives
\[
\pi_3\in\{n-1,n-3,n-4\}.
\]

\smallskip
\noindent
\emph{Subcase 2a: \(\pi_3=n-1\).}

Then \(\pi_4=n-3\), since \(|\pi_4-(n-1)|\le 2\) and the values \(n\) and \(n-2\) are already used. Define
\[
\phi_2:B_n^{(2a)}\to B_{n-3},
\qquad
\phi_2(n,n-2,n-1,\pi_4,\dots,\pi_n):=(\pi_4,\dots,\pi_n),
\]
where \(B_n^{(2a)}\subseteq B_n\) denotes the subset in this subcase.

Because \(\pi_4=n-3\), the image begins with the maximum element of \(\{1,\dots,n-3\}\), so \(\phi_2(\pi)\in B_{n-3}\). As above, adjacency and \(132\)-avoidance are inherited from \(\pi\).

Conversely, define
\[
\psi_2:B_{n-3}\to B_n^{(2a)},
\qquad
\psi_2(\sigma_1,\dots,\sigma_{n-3})
:=(n,n-2,n-1,\sigma_1,\dots,\sigma_{n-3}).
\]
Since \(\sigma_1=n-3\), the new boundary steps satisfy
\[
|n-(n-2)|=2,\qquad |(n-2)-(n-1)|=1,\qquad |(n-1)-(n-3)|=2.
\]
Again, no new \(132\)-pattern is created: the inserted entries are \(n,n-2,n-1\), and any triple entirely in the suffix would already have appeared in \(\sigma\). Thus \(B_n^{(2a)}\cong B_{n-3}\), and this subcase contributes \(|B_{n-3}|\).

\smallskip
\noindent
\emph{Subcase 2b: \(\pi_3=n-3\).}

This is impossible. Indeed, the value \(n-1\) must still appear somewhere to the right of position $3$. Since the only values within distance $2$ of $n-1$ are $n,n-2,n-3$, and $n$ and $n-2$ have already occurred, the value $n-1$ must be adjacent to $n-3$, forcing $\pi_4=n-1$. But then there is no valid choice for $\pi_5$, since all values within distance $2$ of $n-1$ have already been used. Since the only values within distance $2$ of $n-1$ are $n$, $n-2$, and $n-3$, all of which have already been placed, no valid choice for $\pi_5$ exists, contradicting the adjacency condition.

\smallskip
\noindent
\emph{Subcase 2c: \(\pi_3=n-4\).}

In this case there is exactly one permutation, namely
\[
\omega_n=
\begin{cases}
(n,n-2,n-4,\dots,2,1,3,5,\dots,n-1), & \text{if } n \text{ is even},\\[4pt]
(n,n-2,n-4,\dots,1,2,4,\dots,n-1), & \text{if } n \text{ is odd}.
\end{cases}
\]

We first verify that \(\omega_n\in B_n\). Every consecutive difference is $2$, except for the single step $2\to 1$ when $n$ is even, which has size $1$. Thus $\omega_n$ satisfies the adjacency condition. It also avoids $132$: the prefix $(n, n-2, n-4, \ldots)$ is strictly decreasing and the suffix $(\ldots, n-1)$ is strictly increasing, so no $132$-pattern can lie entirely within either part. For a triple spanning the boundary with two indices in the  prefix and one in the suffix, say $i < j \leq p < k$, we have $\pi_i > \pi_j$ since the prefix is decreasing, so $\pi_i < \pi_k < \pi_j$ is impossible. For one index in the prefix and two in the suffix, say $i \leq p < j < k$, we have $\pi_j < \pi_k$ since the suffix is increasing, so $\pi_i < \pi_k < \pi_j$ is impossible. Finally, if $k = n$ then $\pi_n = n$ is the global maximum and no $132$-pattern can involve it as the largest element. Therefore $\omega_n \in B_n$.

To see uniqueness, suppose $\pi \in B_n$ has $\pi_2 = n-2$ and $\pi_3 = n-4$. We claim the remaining entries are completely forced. At each subsequent position $i \geq 3$, the current value $\pi_i$ has decreased by $2$ from $\pi_{i-1}$, so the only unused values within distance $2$ are $\pi_i - 2$ (below) and $\pi_i + 2$ (above, already visited). Choosing $\pi_i + 2$ is impossible since it has already been placed. Choosing $\pi_i - 1$ skips $\pi_i - 2$, leaving it as the smallest unvisited value; but all subsequent entries will be below $\pi_i - 1$, so the only way to later reach $\pi_i - 2$ would be from a neighbor within distance $2$, namely $\pi_i$ or $\pi_i - 4$ or $\pi_i - 2 \pm 1$, all of which will themselves have been consumed by then
-- a contradiction. Therefore the descent must continue by $-2$ at every step until the minimum of $\{1,2\}$ is reached, after which the remaining elements form a consecutive increasing block and must appear in increasing order to satisfy the adjacency condition. Thus $\pi = \omega_n$, and this subcase contributes exactly $1$.

\smallskip
\noindent
Combining the three subcases yields
\[
|B_n| = |B_{n-1}| + |B_{n-3}| + 1.
\]
\end{proof}

\subsubsection{Reduction of $A_n$}
\begin{theorem}
Combining the state decomposition \eqref{eq:m2-split}
with Proposition~\ref{lem:Dn-structure}, Lemma~\ref{lem:Cn-structure}, and Lemma~\ref{lem:Bn-structure},
we obtain, for all $n \ge 3$,
\begin{equation}
\label{eq:m2-An-in-Bn}
A_n = |B_n| + |B_{n-2}| + (n-1).
\end{equation}
\end{theorem}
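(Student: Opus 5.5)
The identity \eqref{eq:m2-An-in-Bn} comes from substituting the three subclass counts into \eqref{eq:m2-split}. First I will confirm that the decomposition $A_n^{(2)} = B_n \sqcup C_n \sqcup D_n$ really is a partition for $n \ge 3$.

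Coverage follows from Corollary~\ref{lem:m2-pos}: every $\pi \in A_n^{(2)}$ has $n$ in position $1$, $2$, or $n$. Disjointness holds because $n$ occupies exactly one position and, once $n \ge 3$, the positions $1$, $2$, $n$ are pairwise distinct. This is precisely where the hypothesis $n \ge 3$ enters. For $n = 2$ the sets $C_2$ and $D_2$ coincide, both being $\{(1,2)\}$, so the split would double count.

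With the partition in hand, \eqref{eq:m2-split} gives $A_n = |B_n| + |C_n| + |D_n|$. I will then substitute the following, each valid for $n \ge 3$:
\begin{itemize}
\item Corollary~\ref{cor:Dn-count} (a consequence of Proposition~\ref{lem:Dn-structure}), which gives $|D_n| = n-1$ for $n \ge 2$.
\item Lemma~\ref{lem:Cn-structure}, which gives $|C_n| = |B_{n-2}|$ for $n \ge 3$.
\end{itemize}
Together these yield $A_n = |B_n| + |B_{n-2}| + (n-1)$.

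Lemma~\ref{lem:Bn-structure} is not needed for the identity itself. It only makes the right-hand side computable. I will note that the needed values of $|B_k|$ are defined for $k \ge 1$, since $B_{n-2}$ with $n \ge 3$ is at least $B_1$. That lemma supplies $|B_1|, |B_2|, |B_3|$ and the recurrence $|B_n| = |B_{n-1}| + |B_{n-3}| + 1$ beyond that.

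The only real subtlety is the boundary case $n = 3$, where $C_3$ is matched with $B_1 = \{(1)\}$. I will check this directly:
\begin{itemize}
\item $C_3 = \{(2,3,1)\}$, which has size $1$ and agrees with $|B_1| = 1$.
\item $D_3$ consists of $(1,2,3)$ and $(2,1,3)$, so $|D_3| = 2 = 3-1$.
\item $B_3$ has size $2$.
\end{itemize}
This gives $A_3 = 2 + 1 + 2 = 5 = C_3$, which is consistent with the Catalan regime, since $m = 2 = n-1$ here. That consistency serves as a sanity check on the formula.
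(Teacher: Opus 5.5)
Your proposal is correct and follows the paper's proof: both substitute $|C_n| = |B_{n-2}|$ (Lemma~\ref{lem:Cn-structure}) and $|D_n| = n-1$ into $A_n = |B_n| + |C_n| + |D_n|$. Your additional points are accurate details the paper leaves implicit rather than a different route: the three classes are disjoint only once $n \ge 3$, Lemma~\ref{lem:Bn-structure} is not actually needed for the identity, and the direct check at $n=3$ confirms the formula.
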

\begin{proof}
Recall from \eqref{eq:m2-split} that
\[
A_n = |B_n| + |C_n| + |D_n|.
\]

By Lemma~\ref{lem:Cn-structure}, we have
\[
|C_n| = |B_{n-2}|.
\]

By Proposition~\ref{lem:Dn-structure}, we have
\[
|D_n| = n-1.
\]

Substituting these expressions into \eqref{eq:m2-split}
yields
\[
A_n = |B_n| + |B_{n-2}| + (n-1),
\]
as claimed.
\end{proof}

\subsection{Generating Functions}

Define ordinary generating functions
\[
B(x):=\sum_{n\ge1} |B_n|x^n,
\qquad
A(x):=\sum_{n\ge1} A_n x^n.
\]

\subsubsection{Generating function for $B(x)$}

From Lemma~\ref{lem:Bn-structure}, for $n\ge4$ we have
\[
|B_n|=|B_{n-1}|+|B_{n-3}|+1.
\]
Multiplying by $x^n$ and summing over $n\ge4$ gives
\[
\sum_{n\ge4}|B_n|x^n
=
\sum_{n\ge4}|B_{n-1}|x^n
+
\sum_{n\ge4}|B_{n-3}|x^n
+
\sum_{n\ge4}x^n.
\]

We rewrite each term in terms of $B(x)=\sum_{n\ge1}|B_n|x^n$.

First,
\[
\sum_{n\ge4}|B_n|x^n
=
B(x)-|B_1|x-|B_2|x^2-|B_3|x^3.
\]

Next, shifting the index in the second sum,
\[
\sum_{n\ge4}|B_{n-1}|x^n
=
x\sum_{m\ge3}|B_m|x^m
=
x\bigl(B(x)-|B_1|x-|B_2|x^2\bigr).
\]

Similarly,
\[
\sum_{n\ge4}|B_{n-3}|x^n
=
x^3\sum_{m\ge1}|B_m|x^m
=
x^3 B(x),
\]
and
\[
\sum_{n\ge4}x^n=\frac{x^4}{1-x}.
\]

Substituting $|B_1|=1$, $|B_2|=1$, $|B_3|=2$, we obtain
\begin{align*}
B(x)-x-x^2-2x^3
&=
x\bigl(B(x)-x-x^2\bigr)
+
x^3B(x)
+
\frac{x^4}{1-x}.
\end{align*}

Expanding the right-hand side gives
\[
B(x)-x-x^2-2x^3
=
xB(x)-x^2-x^3 + x^3B(x) + \frac{x^4}{1-x}.
\]

Rearranging terms,
\[
B(x) - xB(x) - x^3B(x)
=
x + x^2 + x^3 + \frac{x^4}{1-x}.
\]

Factoring,
\[
(1-x-x^3)B(x)
=
x + x^2 + x^3 + \frac{x^4}{1-x}.
\]

Putting the right-hand side over a common denominator,
\begin{align*}
x + x^2 + x^3 + \frac{x^4}{1-x}
&=
\frac{(x+x^2+x^3)(1-x) + x^4}{1-x} \\
&=
\frac{x - x^2 + x^3}{1-x}.
\end{align*}

Thus
\[
(1-x-x^3)B(x)=\frac{x(1-x+x^2)}{1-x},
\]
and therefore
\begin{equation}\label{eq:m2-Bx}
B(x)=\frac{x(1-x+x^2)}{(1-x)(1-x-x^3)}.
\end{equation}

\subsubsection{Generating function for $A(x)$}

Using \eqref{eq:m2-An-in-Bn},
\[
A(x)
=
\sum_{n\ge1}|B_n|x^n
+
\sum_{n\ge1}|B_{n-2}|x^n
+
\sum_{n\ge1}(n-1)x^n.
\]

The first sum is $B(x)$.

For the second sum, interpreting $|B_k|=0$ for $k\le0$,
\[
\sum_{n\ge1}|B_{n-2}|x^n
=
x^2 B(x).
\]

For the third sum, we use
\[
\sum_{n\ge1}nx^n=\frac{x}{(1-x)^2},
\qquad
\sum_{n\ge1}x^n=\frac{x}{1-x},
\]
so
\[
\sum_{n\ge1}(n-1)x^n
=
\frac{x}{(1-x)^2}-\frac{x}{1-x}
=
\frac{x^2}{(1-x)^2}.
\]

Hence
\begin{equation}\label{eq:m2-Ax-in-Bx}
A(x)=(1+x^2)B(x)+\frac{x^2}{(1-x)^2}.
\end{equation}

Substituting \eqref{eq:m2-Bx}, we obtain
\[
A(x)
=
\frac{x(1+x^2)(1-x+x^2)}{(1-x)(1-x-x^3)}
+
\frac{x^2}{(1-x)^2}.
\]

To combine these terms, we use the common denominator $(1-x)^2(1-x-x^3)$:
\begin{align*}
A(x)
&=
\frac{x(1+x^2)(1-x+x^2)(1-x) + x^2(1-x-x^3)}
{(1-x)^2(1-x-x^3)}.
\end{align*}

Expanding the numerator yields
\[
-x^6 + x^5 - 3x^4 + 2x^3 - x^2 + x,
\]
so
\begin{equation}\label{eq:m2-Ax}
A(x)=\frac{-x^6 + x^5 - 3x^4 + 2x^3 - x^2 + x}{(1-x)^2(1-x-x^3)}.
\end{equation}

A direct check shows that the numerator and denominator have no common factors, so this expression is in lowest terms.

\subsection{Rationality and a Linear Recurrence for $A_n$}

From \eqref{eq:m2-Ax}, we have
\[
A(x)=\frac{P(x)}{Q(x)},
\qquad
Q(x)=(1-x)^2(1-x-x^3),
\]
so $A(x)$ is a rational generating function.

Expanding the denominator gives
\[
Q(x)
=
(1-x)^2(1-x-x^3)
=
1-3x+3x^2-2x^3+2x^4-x^5.
\]

Multiplying both sides by $Q(x)$ yields
\[
Q(x)\,A(x)=P(x).
\]
Writing $A(x)=\sum_{n\ge1}A_n x^n$, we expand the left-hand side:
\begin{align*}
Q(x)A(x)
&=
(1-3x+3x^2-2x^3+2x^4-x^5)
\sum_{n\ge1}A_n x^n \\
&=
\sum_{n\ge1}A_n x^n
-3\sum_{n\ge1}A_n x^{n+1}
+3\sum_{n\ge1}A_n x^{n+2} \\
&\quad
-2\sum_{n\ge1}A_n x^{n+3}
+2\sum_{n\ge1}A_n x^{n+4}
-\sum_{n\ge1}A_n x^{n+5}.
\end{align*}

Reindexing each sum to express everything in terms of $x^n$, we obtain
\[
Q(x)A(x)
=
\sum_{n\ge1}
\bigl(
A_n
-3A_{n-1}
+3A_{n-2}
-2A_{n-3}
+2A_{n-4}
-A_{n-5}
\bigr)x^n,
\]
where we interpret $A_k=0$ for $k\le0$.

Since $P(x)$ has degree $6$, we have $[x^n]P(x) = 0$ for all $n \geq 7$. Therefore, extracting the coefficient of $x^n$ from $Q(x)A(x) = P(x)$ gives, for all $n \geq 7$,
\[
A_n - 3A_{n-1} + 3A_{n-2} - 2A_{n-3} + 2A_{n-4} - A_{n-5} = 0,
\]
or equivalently,

\begin{equation}
\label{eq:rec}
A_n = 3A_{n-1} - 3A_{n-2} + 2A_{n-3} - 2A_{n-4} + A_{n-5}.
\end{equation}

The recurrence \eqref{eq:rec} holds for all $n \geq 7$; 
the initial values $A_1$ through $A_6$, verified by direct 
enumeration, are
\[
A_1 = 1,\quad A_2 = 2,\quad A_3 = 5,\quad
A_4 = 8,\quad A_5 = 12,\quad A_6 = 18.
\]

\subsection{Asymptotic Growth}

The singularities of $A(x)$ arise from the poles at $x=1$ and from the roots of $1-x-x^3$.
Let $\rho \in (0,1)$ be the unique positive real root of
\[
1-x-x^3=0.
\]
Since $\rho<1$, it is the dominant singularity (i.e.\ the pole of smallest modulus), and it is simple.

Write
\[
A(x)=\frac{P(x)}{Q(x)}, \qquad Q(x)=(1-x)^2(1-x-x^3).
\]
Since $\rho$ is a simple root of $1-x-x^3$, it is also a simple pole of $A(x)$.

Near $x=\rho$, we have the local expansion
\[
A(x) \sim \frac{P(\rho)}{Q'(\rho)} \cdot \frac{1}{x-\rho}.
\]
Using
\[
\frac{1}{x-\rho} = -\frac{1}{\rho}\cdot\frac{1}{1-x/\rho},
\]
it follows that
\[
A(x) \sim -\frac{P(\rho)}{\rho\,Q'(\rho)} \cdot \frac{1}{1-x/\rho}.
\]
Extracting coefficients gives
\[
A_n \sim C\,\rho^{-n},
\qquad
C = -\frac{P(\rho)}{\rho\,Q'(\rho)}.
\]

We now compute $P(\rho)$ and $Q'(\rho)$. From the expression for $A(x)$,
\[
P(x) = -x^6 + x^5 - 3x^4 + 2x^3 - x^2 + x.
\]
Using the relation $\rho^3 = 1-\rho$, we compute
\[
\rho^4 = \rho(1-\rho), \qquad
\rho^5 = \rho^2(1-\rho), \qquad
\rho^6 = (1-\rho)^2.
\]
Substituting into $P(\rho)$,
\begin{align*}
P(\rho)
&= - (1-\rho)^2 + \rho^2(1-\rho) - 3\rho(1-\rho) + 2(1-\rho) - \rho^2 + \rho \\
&= \rho(2\rho - 1).
\end{align*}

Next,
\[
Q(x)=(1-x)^2(1-x-x^3),
\]
so
\[
Q'(x)=2(1-x)(-1)(1-x-x^3) + (1-x)^2(-1-3x^2).
\]
Since $1-\rho-\rho^3=0$, the first term vanishes at $x=\rho$, giving
\[
Q'(\rho)=(1-\rho)^2(-1-3\rho^2).
\]

Therefore
\[
C = \frac{2\rho-1}{(1-\rho)^2(1+3\rho^2)}.
\]

Numerically,
\[
\rho \approx 0.6823278038,
\qquad
\alpha := \rho^{-1} \approx 1.4655712319,
\qquad
C \approx 1.5076770639.
\]

In particular, the exponential growth constant is $\alpha$, which is the unique real root $>1$ of
\[
\alpha^3 = \alpha^2 + 1.
\]

\section{General Structural Conjectures}
\label{sec:conjectures}
The cases $m=1$ and $m=2$ strongly suggest that the interaction between $132$-avoidance and the adjacency constraint produces a systematic structural collapse for fixed $m$.

In this section we formulate conjectures describing this phenomenon.
\subsection{Finite-State Structure}

\medskip

The structural decomposition obtained for $m=2$ shows that the class $A_n^{(2)}$ collapses to finitely many local configurations determined by the initial entries of the permutation. Each configuration evolves according to a fixed set of transition rules, yielding a finite-state description of the class and consequently a rational generating function.

This motivates the following conjecture.

\begin{conjecture}[Finite-state structure]\label{conj:finite-state}
For every fixed integer $m \ge 1$, the class
\[
A_n^{(m)} =
\{\pi \in S_n : \pi \text{ avoids } 132 \text{ and }
|\pi_{i+1}-\pi_i| \le m \}
\]
admits a finite structural decomposition into a bounded collection of
states determined by the local configuration near the beginning of the
permutation. Transitions between these states depend only on this local
information.
\end{conjecture}

\medskip

If Conjecture~\ref{conj:finite-state} holds, then the enumeration of the
class may be described by a finite transition system. In particular,
the sequence $\{A_n^{(m)}\}$ satisfies a linear recurrence with constant
coefficients, and the generating function
\[
A^{(m)}(x) = \sum_{n\ge0} A_n^{(m)} x^n
\]
is rational.

\subsection{Growth Constants and Limiting Behavior}

The extremal cases illustrate two regimes:

\begin{itemize}
    \item For $m=1$, one has $A_n^{(1)}=2$ for $n\ge2$,
    so the growth rate is $1$.
    
    \item For $m \ge n-1$, one recovers the Catalan numbers,
    whose exponential growth rate is $4$.
\end{itemize}

For $m=2$, we obtained
\[
A_n^{(2)} = \Theta(\alpha_2^n),
\qquad
\alpha_2 \approx 1.46557.
\]

Empirical data for $m=3$ suggests
\[
1 < \alpha_2 < \alpha_3 < 4.
\]

This leads naturally to the following conjecture.

\begin{conjecture}[Monotonic growth constants]\label{conj:monotonic-growth}
For fixed $m$, the limit
\[
\alpha_m = \lim_{n\to\infty} (A_n^{(m)})^{1/n}
\]
exists, and
\[
1 = \alpha_1 < \alpha_2 < \alpha_3 < \cdots < 4.
\]
Moreover,
\[
\lim_{m\to\infty} \alpha_m = 4.
\]
\end{conjecture}

\medskip

\noindent
\textbf{Heuristic justification.}
Increasing $m$ weakens the adjacency constraint,
thereby enlarging the permutation class. Thus $A_n^{(m)}$ is nondecreasing in $m$ for each fixed $n$, implying $\alpha_m$ is nondecreasing. As $m$ grows relative to $n$, the constraint becomes vacuous, and the Catalan growth rate $4$ is recovered.

\section{Open Problems and Future Directions}
\label{sec:future}
The case $m=2$ admits a complete structural and analytic solution. The case $m=3$ exhibits clear structural restrictions but does not collapse as cleanly. These observations suggest several natural directions for further study.

\subsection{Exact Solution for $m=3$}

The first open problem is to obtain a complete structural description
of the class
\[
A_n^{(3)} =
\left\{
\pi \in S_n :
\pi \text{ avoids } 132
\text{ and }
|\pi_{i+1}-\pi_i|\le 3
\right\}.
\]

The position of the maximum element has been restricted to
\[
\{1,2,3,n\},
\]
and preliminary analysis indicates that only finitely many local
boundary configurations arise. However, unlike the case $m=2$,
each placement of $n$ leads to multiple viable branches that do not immediately collapse into forced configurations.

An exact solution would require:

\begin{itemize}
    \item A complete classification of refined subclasses
    (e.g.\ configurations determined by the first few entries).
    \item A proof that only finitely many such boundary types occur.
    \item A derivation of the corresponding linear recurrence and
    rational generating function.
\end{itemize}

Establishing this would confirm the finite-state conjecture
in the first genuinely branching regime.

\subsection{Higher Values of $m$}

For general fixed $m$, the structural complexity appears to grow with $m$, but remains locally constrained. In particular, as shown in Theorem~\ref{thm:max-position-general-m}, the maximum element $n$ may occur only in positions
\[
\{1,2,\dots,m,n\}.
\]
This restriction reduces the Catalan decomposition to finitely many positional cases for each fixed $m$, suggesting that the class may admit a finite structural description.

Two major open questions remain:

\begin{enumerate}
    \item Does a finite-state decomposition exist for every fixed $m$?
    \item If so, can one describe an explicit recursive scheme analogous to the $m=2$ case?
\end{enumerate}

Even partial progress — for example, bounding the number of admissible boundary types as a function of $m$ — would clarify the global structure of the class.

Another natural direction is to determine how the order of the linear recurrence (if it exists) grows with $m$, e.g.\ whether it is linear, polynomial, or exponential.

\subsection{Connections to Other Catalan-Type Classes}

Classical $132$-avoiding permutations are counted by the Catalan numbers
and are in bijection with numerous Catalan objects, including
binary trees and Dyck paths.

It is natural to ask whether the adjacency-constrained classes
$A_n^{(m)}$ admit analogous interpretations.

Possible directions include:

\begin{itemize}
    \item Interpreting the adjacency constraint as a bounded-slope
    condition on Dyck paths.
    \item Relating the class to pattern-avoiding permutations
    with additional vincular or mesh restrictions.
    \item Investigating whether $A_n^{(m)}$ fits into a broader family
    of Lipschitz-constrained permutation classes.
\end{itemize}

Such connections may provide alternative structural proofs,
geometric interpretations of the finite-state phenomenon,
or new analytic tools for asymptotics.

\medskip

Overall, the interaction between global pattern avoidance and local
adjacency constraints appears to produce a new family of permutation
classes interpolating between rigid monotone structures and the
full Catalan regime. Understanding this interpolation more deeply
remains an intriguing problem.

\clearpage

\bibliographystyle{plain}
\bibliography{references}

\end{document}